\setlist[itemize]{leftmargin=25pt}
\setlist[enumerate]{leftmargin=25pt}
\newtheorem{theorem}{Theorem}[section]
\newtheorem{lemma}[theorem]{Lemma}
\newtheorem{prop}[theorem]{Proposition}
\newtheorem{cor}[theorem]{Corollary}
\theoremstyle{definition}
\newtheorem{con}[theorem]{Conjecture}
\theoremstyle{remark}
\newtheorem{remark}[theorem]{Remark}
\numberwithin{equation}{section}
\let \la=\lambda
\let \d=\delta
\let \a=\alpha
\let \f=\varphi
\let \ga=\gamma
\begin{document}
\title[On an improved restricted reverse weak-type bound]
{On an improved restricted reverse weak-type bound for the maximal operator}

\author[A.K. Lerner]{Andrei K. Lerner}
\address[A.K. Lerner]{Department of Mathematics,
Bar-Ilan University, 5290002 Ramat Gan, Israel}
\email{lernera@math.biu.ac.il}

\thanks{The author was supported by ISF grant no. 1035/21.}

\begin{abstract}
We obtain an improved lower bound for the restricted reverse weak-type estimate of the Hardy–Littlewood maximal operator $M$. This result is applied to
the $\lambda$-median maximal operator $m_{\la}$ acting on a Banach function space $X$. We show that under certain assumptions on $X$, the boundedness
properties of $m_{\la}$ and $M$ are equivalent.
\end{abstract}

\keywords{Maximal operator, halo function, Banach function spaces}
\subjclass[2020]{42B20, 42B25, 46E30}

\maketitle

\section{Introduction}
Let $Q\subset {\mathbb R}^n$ be a cube. Denote by $M_Q$ the Hardy--Littlewood maximal operator restricted to $Q$, that is,
$$M_Qf(x):=\sup_{R\ni x, R\subset Q}\frac{1}{|R|}\int_R|f|,$$
where the supremum is taken over all cubes $R\subset Q$ containing the point $x\in Q$.

For $\la\in (0,1)$ define a lower Tauberian-type constant
$$C_n(\la):=\inf_{E\subset Q:0<|E|\le \la|Q|}\frac{|\{x\in Q:M_Q\chi_E(x)>\la\}|}{|E|}.$$

If the infimum above is replaced by a supremum, the resulting quantity is well known (and has been studied far beyond the cubic maximal operator). In that setting one obtains the so-called halo function, and there is an extensive literature on its behavior as $\la\to 1^-$; see, for example, \cite{HP14,HP15,S93}.

In contrast, it appears that the behavior of $C_n(\la)$ as $\la\to 1^-$ has not been investigated, even in the cubic case. The only classical lower bound (implicit in Stein \cite{S69}) comes from the Calder\'on--Zygmund decomposition and yields
$$C_n(\la)\ge \frac{1}{2^n\la}.$$
This estimate is meaningful only for $\la\in (0, 1/2^n)$. For $\la\ge 1/2^n$ it becomes trivial, since $M_Q\chi_E\ge\chi_E$ a.e. on $Q$,  and hence $C_n(\la)\ge 1$ for all $\la\in (0,1)$.
However, it is natural to expect that  $C_n(\la)>1$ for every $\la$ arbitrarily close to $1$.

Our first result is an improved lower bound for $C_n(\la)$ which is new for all $\la\in (c_n, 1)$, where $c_n<1/2^n$.

\begin{theorem}\label{mr} There exists a constant $B_n$ depending only on $n$ such that
$$C_n(\la)\ge 1+\frac{1-\la}{B_n\la}$$
for all $\la\in (0,1)$.
\end{theorem}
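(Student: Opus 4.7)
The plan is to combine the Calder\'on--Zygmund decomposition with a non-concentric enlargement of the stopping cubes. Fix $E\subset Q$ with $0<|E|\le\la|Q|$ and write $U:=\{M_Q\chi_E>\la\}$; the goal is $|U|\ge\bigl(1+\tfrac{1-\la}{B_n\la}\bigr)|E|$. Running the Calder\'on--Zygmund stopping-time decomposition of $\chi_E$ at height $\la$ produces pairwise disjoint dyadic subcubes $\{Q_j\}$ of $Q$ with $\a_j:=|E\cap Q_j|/|Q_j|\in(\la,1]$ and $|E\setminus\bigcup_j Q_j|=0$. Since $M_Q\chi_E\ge\a_j>\la$ on each $Q_j$, we have $\bigcup_j Q_j\subset U$, hence $|U\setminus E|\ge\sum_j(1-\a_j)|Q_j|$.

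Second, each $Q_j$ admits a mild enlargement inside $Q$. Since $(\a_j/\la)|Q_j|=|E\cap Q_j|/\la\le|E|/\la\le|Q|$, one can place a cube $\widetilde Q_j\subset Q$ with $Q_j\subset\widetilde Q_j$ and $|\widetilde Q_j|$ just below $(\a_j/\la)|Q_j|$. The $E$-density in $\widetilde Q_j$ then strictly exceeds $\la$, so $\widetilde Q_j\subset U$. Summing, $\sum_j|\widetilde Q_j|\to|E|/\la$; were the $\widetilde Q_j$ pairwise disjoint, we would conclude $|U|\ge|E|/\la$, the theorem with $B_n=1$. The two complementary contributions $\sum_j(1-\a_j)|Q_j|$ (interior of the stopping cubes) and $\sum_j(|\widetilde Q_j|-|Q_j|)$ (enlargement) together equal $|E|(1-\la)/\la$, so the claim follows provided a dimension-dependent fraction of the enlargement mass is genuinely new, i.e.\ lies outside $\bigcup_k Q_k$.

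The main obstacle is this overlap control: the enlargement factor $\a_j/\la$ can be as large as $1/\la$, and dilates of disjoint dyadic cubes can accumulate with unbounded multiplicity. I would split into regimes. For $\la\le 2^{-(n+1)}$ the classical bound $C_n(\la)\ge 1/(2^n\la)$ already implies the theorem. For $\la>2^{-(n+1)}$ the enlargement factor is bounded by $2^{n+1}$, and a Vitali- or Besicovitch-type covering argument on $\{\widetilde Q_j\}$---together with a judicious choice of the translation direction for each $\widetilde Q_j$ (toward the free region $Q\setminus\bigcup_k Q_k$)---should produce a subfamily of dimensional multiplicity. Quantifying this multiplicity control is the hard step and sets the size of $B_n$.
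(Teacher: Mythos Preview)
Your proposal is incomplete: you correctly identify the overlap control of the enlarged cubes $\widetilde Q_j$ as the hard step, but you do not carry it out, and the suggested route via Vitali or Besicovitch on the predetermined family $\{\widetilde Q_j\}$ does not work as stated. Besicovitch requires a cube through \emph{every} point of the set to be covered; here you have only one cube $\widetilde Q_j$ per Calder\'on--Zygmund cube $Q_j$, and since the $Q_j$ can have wildly varying scales, a single large $\widetilde Q_j$ may swallow arbitrarily many small $Q_k$'s regardless of how you translate it. Vitali selection of a disjoint subfamily would, conversely, discard most of the mass and destroy the identity $\sum_j|\widetilde Q_j|=|E|/\la$ you need. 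The ``judicious translation toward the free region'' is a heuristic, not an argument.

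The paper bypasses this difficulty by replacing the Calder\'on--Zygmund decomposition altogether. Following Ivanisvili--Jaye--Nazarov, it invokes a covering lemma of Mateu--Mattila--Nicolau--Orobitg: for each density point $x$ of $E$, a continuity argument (shrinking a cube from $Q$ down toward $x$) produces a cube $R_x\subset Q$ containing $x$ with $|E\cap R_x|=\la|R_x|$ \emph{exactly}. Because there is such a cube at every density point, Besicovitch now legitimately applies and yields a subfamily $\{R_j\}$ covering almost all of $E$ with overlap bounded by $B_n$. Setting $\psi=\sum_j\chi_{R_j}$, the exact density gives $\int_E\psi=\la\int_Q\psi$, while $\psi$ vanishes on $\{M_Q\chi_E<\la\}$; hence $(1-\la)\int_E\psi=\la\int_{\{M_Q\chi_E\ge\la\}\setminus E}\psi$, and the bounds $1\le\psi\le B_n$ on the relevant sets yield $(1-\la)|E|\le B_n\la\bigl(|\{M_Q\chi_E\ge\la\}|-|E|\bigr)$. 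The idea you are missing is that building cubes with density exactly $\la$ from the outset---rather than density $>\la$ followed by an enlargement step---is precisely what puts you in the Besicovitch framework and makes the overlap constant dimensional.
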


The proof of this result follows a method from the paper by Ivanishvili--Jaye--Nazarov \cite{IJN17} where an improved lower bound for the $L^p$ norm of the maximal operator was obtained.

Theorem \ref{mr} says that the ``halo operator'' $H_{\la}(E):=\{M_Q\chi_E>\la\}$ enlarges a set $E$ for every $\la\in (0,1)$. In particular, one can iterate $H^k_{\la}(E)$ until it saturates at $Q$. We will use this
iteration principle by applying it in a slightly different form to the $\la$-median maximal operator (also called the local maximal operator) defined by
$$m_{\la}f(x):=\sup_{Q\ni x}(f\chi_Q)^*(\la|Q|),$$
where $f^*$ denotes the non-increasing rearrangement, and the supremum is taken over all cubes containing the point $x$.

It follows from the definition that $m_{\la}f$ is non-increasing in $\la$. In fact, it is not difficult to show (see the Appendix) that $\lim_{\la\to 1^-}m_{\la}f=|f|$ almost everywhere.
Moreover, by Chebyshev's inequality, for every $\d>0$ we have $m_{\la}f\le \la^{-1/\d}M_{\d}f$, where $M_{\d}f:=M(|f|^{\d})^{1/\d}$, and $M$ denotes the maximal operator on ${\mathbb R}^n$.
These observations indicate that $m_{\la}$ is essentially smaller than $M$. However, our next results show that, under several additional assumptions, the boundedness of $m_{\la}$ on a Banach function space $X$ can be upgraded to that of $M$ on $X$.

For a Banach function space $X$ over ${\mathbb R}^n$ and $p>0$ set
$$\|f\|_{X^p}:=\||f|^{1/p}\|_{X}^{p}.$$

\begin{theorem}\label{itr} Suppose that $m_{\la}$ is bounded on $X$ for some $\la\in (0,1)$. Then there exists $p>1$ such that $M$ is bounded on $X^{1/p}$.
\end{theorem}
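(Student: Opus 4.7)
The plan is to reduce to indicator functions, apply Theorem~\ref{mr} iteratively to obtain a polynomial-type level-set bound for $M\chi_E$ in $X$, integrate via layer-cake to get a strong-type bound for $M$ on indicators in the scale $X^{1/p}$, and then extend from indicators to all of $X^{1/p}$. The main obstacle will be the last step.

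I will start with the identity $m_\la\chi_E = \chi_{H_\la(E)}$, where $H_\la(E):=\{M\chi_E>\la\}$: for any measurable $E$ and cube $Q$, the rearrangement $(\chi_E\chi_Q)^*(\la|Q|)$ equals $1$ precisely when $|E\cap Q|>\la|Q|$, which in turn is the condition $M\chi_E>\la$ on all of $Q$. The hypothesis then becomes the Tauberian indicator bound $\|\chi_{H_\la(E)}\|_X\leq K\|\chi_E\|_X$ with $K:=\|m_\la\|_{X\to X}$, which iterates to $\|\chi_{H_\la^j(E)}\|_X\leq K^j\|\chi_E\|_X$ for every $j\geq 1$.

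Next, setting $\eta:=1+(1-\la)/(B_n\la)$, I claim $\{M\chi_E>\a\}\subseteq H_\la^{k(\a)}(E)$ for $\a\in(0,\la)$, with $k(\a):=\lceil\log(\la/\a)/\log\eta\rceil+1$: given $x$ with $M\chi_E(x)>\a$, pick $Q\ni x$ so that $|E\cap Q|/|Q|>\a$, and iterate Theorem~\ref{mr} inside $Q$ -- each application multiplies the density in $Q$ by the factor $\eta$ (as long as it stays below $\la$), and after $k(\a)$ iterations this density surpasses $\la$, so $Q\subseteq H_\la^{k(\a)}(E)$ and in particular $x\in H_\la^{k(\a)}(E)$. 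Combining with the previous paragraph,
\[
\|\chi_{\{M\chi_E>\a\}}\|_X \leq K^{k(\a)}\|\chi_E\|_X \leq C(\la/\a)^\gamma\|\chi_E\|_X, \qquad \gamma:=\log K/\log\eta.
\]
Inserting this into the layer-cake identity $(M\chi_E)^p = p\int_0^1 t^{p-1}\chi_{\{M\chi_E>t\}}\,dt$ and invoking Minkowski's integral inequality in $X$, the resulting integral converges for any $p>\max(\gamma,1)$, giving the strong-type bound on indicators $\|(M\chi_E)^p\|_X\leq C_p\|\chi_E\|_X$, equivalently $\|M\chi_E\|_{X^{1/p}}\leq C_p^{1/p}\|\chi_E\|_{X^{1/p}}$, uniformly in $E$.

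The hard part will be promoting this indicator bound to the full norm estimate $\|Mf\|_{X^{1/p}}\leq C\|f\|_{X^{1/p}}$ on arbitrary $f\in X^{1/p}$. My approach will be a Calder\'on--Zygmund-type decomposition: for $f\geq 0$ and $t>0$, the inclusion $\{Mf>t\}\subseteq\{Mf_{>t/2}>t/2\}$, combined with a weighted pigeonhole on any cube $Q$ realising $\frac{1}{|Q|}\int_Q f>t$ applied to the dyadic partition $A_k^{(t)}:=\{2^{k-1}t<f\leq 2^kt\}$, should yield a pointwise covering of the form
\[
\{Mf>t\}\subseteq \bigcup_{k\geq 0}\bigl\{M\chi_{A_k^{(t)}}>c_\e 2^{-k(1+\e)}\bigr\}, \qquad \e>0.
\]
Applying the level-set bound on indicators to each piece and integrating against $pt^{p-1}dt$ will control $\|(Mf)^p\|_X$ by a weighted sum of norms of indicators of level sets of $f$; reducing this to $\|f^p\|_X$ is the delicate point in a general Banach function space, and I expect it to require either an additional iteration of the self-improvement mechanism on the scale $X^{1/p}$, or a Marcinkiewicz-type interpolation that exploits the fact that the indicator bound of the previous paragraph holds uniformly across the two-parameter family $\{X^{1/p}\}_{p>\gamma}$.
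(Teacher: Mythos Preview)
Your level-set and iteration arguments for indicators are correct and are in fact the indicator case of what the paper does. The genuine gap is exactly where you flag it: passing from the uniform bound $\|M\chi_E\|_{X^{1/p}}\le C\|\chi_E\|_{X^{1/p}}$ to $\|Mf\|_{X^{1/p}}\le C\|f\|_{X^{1/p}}$. In a general Banach function space there is no rearrangement invariance and no Marcinkiewicz-type interpolation available from a single space to itself, so your proposed Calder\'on--Zygmund / pigeonhole route would at best give you control of $\|(Mf)^p\|_X$ by sums of $\|\chi_{A_k^{(t)}}\|_X$, and there is no mechanism in an abstract $X$ for reassembling those into $\||f|^p\|_X$. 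The ``additional self-improvement'' you mention would need a new idea; as written, the proof is incomplete.

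The paper sidesteps this obstacle entirely by never specializing to indicators. The point is that your iteration $H_\la^j(E)$ is just $m_\la^{j}\chi_E$, and the pointwise inequality behind it upgrades for free to general $f$: from Theorem~\ref{mr} one gets $m_{\gamma\eta}f\le m_\eta(m_\la f)$ for every measurable $f$ and every $\eta\in(0,1)$, with $\gamma=(1+(1-\la)/(B_n\la))^{-1}<1$. Iterating this pointwise bound and then applying the $X$-norm gives $\|m_{\gamma^k\la}f\|_X\le K^{k+1}\|f\|_X$, hence $\|m_\mu f\|_X\le C\mu^{-r}\|f\|_X$ for all $\mu\in(0,1)$ and all $f\in X$. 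Two further identities then finish the job with no decomposition of $f$ needed: first, $(m_\mu g)^{s}=m_\mu(|g|^{s})$ for every $s>0$, which transfers the bound to $X^{1/p}$ verbatim; second, the pointwise $Mg\le\int_0^1 m_\mu g\,d\mu$ (rearrangement formula for the average), which after Minkowski in $X^{1/p}$ yields $\|Mg\|_{X^{1/p}}\le\int_0^1\|m_\mu g\|_{X^{1/p}}\,d\mu<\infty$ once $p>\max(r,1)$. The moral: iterate $m_\la$ itself rather than its action on indicators, and use that $m_\la$ commutes with powers where $M$ does not.
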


The main idea of the proof is iterating the boundedness of $m_{\la}$ on $X$. This leads to considering the iteration $m_{\la}(m_{\la}f)$. Theorem \ref{mr} plays the crucial role in order to handle this object for $\la$ close to $1$.

By $X'$ we denote the associated space of $X$. Then, using Theorem~\ref{itr} we obtain the following.

\begin{theorem}\label{char}
Let $X$ be a Banach function space. Assume that there exists $p_0>1$ such that, for all $p\ge p_0$,
boundedness of the Hardy--Littlewood maximal operator $M$ on $(X')^{1/p}$ implies its boundedness on $\big[(X')^{1/p}\big]'$.
If $m_{\la}$ is bounded on $X$ and $X'$ for some $\la\in (0,1)$, then $M$ is bounded on $X$.
\end{theorem}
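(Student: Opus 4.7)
The plan is to combine two applications of Theorem~\ref{itr}, a Jensen-type self-improvement to push the scaling exponent past $p_0$, the structural hypothesis of the theorem, and a Calderón--Lozanovskii interpolation to transfer the $M$-boundedness to $X$.

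Apply Theorem~\ref{itr} to $X$ and to $X'$ (using that $m_\la$ is bounded on each): we obtain $p_1,p_2>1$ with $M$ bounded on $X^{1/p_1}$ and on $(X')^{1/p_2}$. Next observe a general self-improvement: for any Banach function space $Y$ and $s\in(0,1]$, convexity of $t\mapsto t^{1/s}$ together with Jensen's inequality on each cube gives the pointwise bound $(Mf)^{1/s}\le M(|f|^{1/s})$, hence
\[
\|Mf\|_{Y^s}\le\|M\|_{Y\to Y}^s\|f\|_{Y^s}.
\]
Applied with $Y=X^{1/p_1}$, and noting that $(X^{1/p_1})^s=X^{s/p_1}$ equals $X^{1/p}$ with $p=p_1/s\ge p_1$, this shows that $M$ is bounded on $X^{1/p}$ for every $p\ge p_1$; the same reasoning gives $M$ bounded on $(X')^{1/p}$ for every $p\ge p_2$. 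Choosing $p\ge\max(p_0,p_1,p_2)$, the hypothesis of Theorem~\ref{char} then also delivers $M$ bounded on $[(X')^{1/p}]'$.

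The final step is to transfer these two endpoint bounds to $X$ itself via interpolation. The key structural claim is that $X$ sits in the Calderón--Lozanovskii product scale of $X^{1/p}$ and $[(X')^{1/p}]'$: there exists $\theta\in(0,1)$, depending on $X$ but not on $p$, such that $X=(X^{1/p})^\theta\cdot\bigl([(X')^{1/p}]'\bigr)^{1-\theta}$ in the Calderón product sense. In the model case $X=L^r$, a one-line Hölder computation yields $\theta=1/r'$, valid for every $p>1$. Given this identification, Shestakov's Calderón--Lozanovskii interpolation theorem for positive sublinear operators delivers
\[
\|M\|_{X\to X}\le\|M\|_{X^{1/p}\to X^{1/p}}^{\theta}\,\|M\|_{[(X')^{1/p}]'\to[(X')^{1/p}]'}^{1-\theta},
\]
which is the desired bound.

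The main obstacle is the Calderón--Lozanovskii identification of $X$ in the last step. For the $L^r$ scale or for weighted Lebesgue spaces the computation is elementary; in a general Banach function space, the natural factorization $g=|f|^{1/p}$, $h=|f|^{(p-\theta)/(p(1-\theta))}$ has to be shown to place $h$ inside $[(X')^{1/p}]'$ with controlled norm, which reduces to an application of Lozanovskii's factorization $L^1=X\cdot X'$. The threshold $p_0$ appearing in the hypothesis is calibrated precisely so that this identification is available, and once it is in place the rest of the proof is a routine interpolation.
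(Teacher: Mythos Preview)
Your first three moves are sound and mirror the paper: apply Theorem~\ref{itr} to $X$ and to $X'$, push the exponent past $p_0$ (the paper does this ``by H\"older's inequality''; your Jensen argument is the same observation), and invoke the hypothesis to get $M$ bounded on both $(X')^{1/p}$ and $\bigl[(X')^{1/p}\bigr]'$ for one large $p$.

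The gap is the final step. The Calder\'on--Lozanovskii identification
\[
X=(X^{1/p})^\theta\bigl(\bigl[(X')^{1/p}\bigr]'\bigr)^{1-\theta}
\]
is false for a general Banach function space, and you do not prove it. Take $X=L^{r,q}$ with $r\neq q$: then $X^{1/p}=L^{rp,qp}$ and $\bigl[(X')^{1/p}\bigr]'=L^{(r'p)',(q'p)'}$, and matching the first Lorentz index in the Calder\'on product forces $\theta=1/r'$ while matching the second forces $\theta=1/q'$, so no $\theta$ works for any $p>1$. Your proposed factorization $g=|f|^{1/p}$, $h=|f|^{(p-\theta)/(p(1-\theta))}$ does not place $h$ in $\bigl[(X')^{1/p}\bigr]'$ with controlled norm in such examples, Lozanovskii's $L^1=X\cdot X'$ notwithstanding. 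The remark that ``the threshold $p_0$ \dots\ is calibrated precisely so that this identification is available'' is simply incorrect: $p_0$ enters only through the hypothesis on $M$ and has nothing to do with any factorization of $X$.

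The paper avoids interpolation entirely. From $M$ bounded on $(X')^{1/p}$ and on $\bigl[(X')^{1/p}\bigr]'$ it applies Rutsky's characterization (Theorem~\ref{rch}, with $X'$ in place of $X$) to conclude that $X=X''$ is $A_p$-regular. Combining this with the boundedness of $M$ on $X^{1/r}$ coming from the first application of Theorem~\ref{itr}, a second result of Rutsky (Theorem~\ref{propr}) upgrades directly to boundedness of $M$ on $X$. No Calder\'on product description of $X$ is needed.
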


At first sight, the assumption in Theorem~\ref{char} may appear rather strong. On the other hand, there is the following conjecture proposed by Nieraeth~\cite{N26}.

\begin{con}[{\cite{N26}}]\label{ncon}
Let $s\in (1,\infty)$, and let $X$ be an $s$-concave Banach function space. If $M$ is bounded on $X$, then $M$ is bounded on~$X'$.
\end{con}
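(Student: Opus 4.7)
The plan is to prove the conjecture in two stages: first, exploit the $s$-concavity of $X$ to upgrade the hypothesis to boundedness of $M$ on a stronger Banach function space $X^{1/p}$ for some $p>1$ (an abstract reverse H\"older-type self-improvement); then, deduce boundedness on $X'$ through Lozanovskii-type duality combined with vector-valued inequalities.

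For the first stage I would exploit that $s$-concavity of $X$ with $s>1$ gives $X$ a ``small excess capacity'' relative to an $s$-convex space: via the Calder\'on--Lozanovskii construction, $X$ can be represented as an intermediate space whose ingredients include $L^s$ and an $s$-convex Banach function space. Given $M:X\to X$, the Rubio de Francia algorithm in $X$ produces weights $w\in X$, $\|w\|_X\le 1$, with $Mw\le c\,w$ pointwise. Combining these weights with the $L^s$ ingredient of the Calder\'on--Lozanovskii representation and with a linearization of $M$ against extremal testing functions in $X'$, I would aim to produce an improvement of the form $M:X^{1/p}\to X^{1/p}$ for some $p>1$, equivalently a vector-valued estimate for $M$ at an exponent strictly bigger than one. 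Heuristically this mirrors the reverse H\"older inequality from $A_q$-theory, with $s>1$ supplying precisely the exponent margin $p-1>0$.

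With such a self-improvement available, I would invoke the Fefferman--Stein vector-valued inequality for $M$ at exponent $p>1$ together with Lozanovskii's identity $X\cdot X'=L^1$ to control $M$ on $X'$: writing the norm in $X'$ as a supremum over simple functions of unit $X$-norm, applying the self-improved boundedness to a suitable Calder\'on-product factor, and collecting terms through the factorization, should yield $M:X'\to X'$. In outline this replicates, at the level of abstract Banach function spaces, the standard argument that derives boundedness of $M$ on $L^{p'}(w^{1-p'})$ from boundedness on $L^p(w)$ via the reverse H\"older property of $w$.

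The principal obstacle, and presumably the reason the conjecture remains open, is the self-improvement step. In the weighted $L^p$ setting this is the reverse H\"older property for $A_p$-weights, whose proof rests on Calder\'on--Zygmund stopping-time arguments that do not transfer cleanly to an arbitrary Banach function space. Making the Rubio de Francia weight interact productively with the $s$-concavity structure of $X$---or, alternatively, replacing the pointwise reverse H\"older inequality by a testing estimate against Rubio de Francia weights, in the spirit of recent abstract extrapolation work---is where the real difficulty lies, and overcoming it appears essentially equivalent to resolving the conjecture itself.
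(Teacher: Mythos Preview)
The statement you are attempting to prove is a \emph{conjecture}, and the paper does not prove it; it explicitly records that ``in full generality it remains open.'' So there is no proof in the paper to compare your proposal against.

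More importantly, your proposal is not a proof but a strategy with an acknowledged gap. You yourself identify the ``principal obstacle'': the self-improvement step from $M:X\to X$ to $M:X^{1/p}\to X^{1/p}$ for some $p>1$. In the weighted $L^p$ case this is the reverse H\"older inequality, whose proof is a concrete stopping-time argument on cubes; as you correctly note, there is no known abstract substitute that uses only $s$-concavity of $X$. Your Rubio de Francia construction produces $A_1$ weights $w\in X$, but it is unclear how to leverage the $s$-concavity to extract the extra exponent $p>1$ from this --- the $A_1$ condition on $w$ gives reverse H\"older for $w$ itself, not a norm inequality for $M$ on the rescaled space $X^{1/p}$. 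The Calder\'on--Lozanovskii representation you invoke (writing $X$ in terms of $L^s$ and an $s$-convex space) is suggestive but not standard in the direction you need, and you do not indicate how to combine it with the Rubio de Francia weights to get the self-improvement.

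Your final paragraph is honest: the step you cannot do ``appears essentially equivalent to resolving the conjecture itself.'' That is an accurate assessment, and it means what you have written is a plausible heuristic program, not a proof.
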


For most standard Banach function spaces, Conjecture \ref{ncon} is true (see \cite{N26} for a discussion). However, in full generality it remains open.

Suppose now that $X$ is an $r$-convex Banach function space for some $r\in (1,\infty)$. Then $X'$ is $r'$-concave, and $(X')^{1/p}$ is $pr'$-concave.
Hence, assuming Conjecture~\ref{ncon}, the hypothesis of Theorem~\ref{char} is satisfied for all $p\ge p_0>1$. In particular, we obtain the following observation related to Conjecture~\ref{ncon}.

\begin{cor}\label{assum}
Assume Conjecture~\ref{ncon}. Let $X$ be an $r$-convex Banach function space for some $r\in (1,\infty)$.
If $m_{\la}$ is bounded on $X$ and $X'$ for some $\la\in (0,1)$, then $M$ is bounded on $X$.
\end{cor}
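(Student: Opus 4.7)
The plan is to deduce the corollary as an essentially routine consequence of Theorem~\ref{char}, using Conjecture~\ref{ncon} only to verify the somewhat restrictive hypothesis on duality of $M$-bounded spaces that appears in Theorem~\ref{char}. The structural input $m_{\la}$ bounded on $X$ and on $X'$ is already exactly what Theorem~\ref{char} asks for, so the content is entirely in showing that the conjecture can be spliced in to supply the missing hypothesis.

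First I would recall, as noted in the paragraph preceding the corollary, the standard Lozanovski\u\i-type duality and convexification identities for Banach function spaces: $r$-convexity of $X$ is equivalent to $r'$-concavity of $X'$, and if $Y$ is $s$-concave then $Y^{1/p}$ (in the sense of the $p$-concavification defined via $\|f\|_{Y^{1/p}}=\||f|^p\|_Y^{1/p}$) is $sp$-concave. Apply these in turn: from $r$-convexity of $X$ we obtain $r'$-concavity of $X'$, and then $(X')^{1/p}$ is $pr'$-concave for every $p>1$.

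Next, choose any $p_0>1$ (e.g.\ $p_0=2$). For each $p\ge p_0$, set $Y_p:=(X')^{1/p}$. Then $Y_p$ is $s$-concave for $s=pr'\in(1,\infty)$, so Conjecture~\ref{ncon} applies to $Y_p$: if $M$ is bounded on $Y_p$, then $M$ is bounded on $Y_p'=\bigl[(X')^{1/p}\bigr]'$. This is precisely the hypothesis of Theorem~\ref{char}. Combined with the assumption that $m_{\la}$ is bounded on both $X$ and $X'$ for some $\la\in(0,1)$, Theorem~\ref{char} yields the boundedness of $M$ on $X$.

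The only point that requires any genuine bookkeeping is the concavity calculation $(X')^{1/p}$ is $pr'$-concave, which is standard but should be stated explicitly. There is no real obstacle here, since the heavy lifting, namely the iteration of $m_{\la}$ and the improved lower bound $C_n(\la)\ge 1+\tfrac{1-\la}{B_n\la}$ of Theorem~\ref{mr}, has already been carried out in Theorems~\ref{itr} and \ref{char}; the role of the $r$-convexity assumption together with Conjecture~\ref{ncon} is simply to guarantee, uniformly in $p\ge p_0$, that the associate spaces $\bigl[(X')^{1/p}\bigr]'$ inherit the boundedness of $M$ that Theorem~\ref{itr} produces on $(X')^{1/p}$.
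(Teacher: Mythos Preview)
Your proposal is correct and follows essentially the same argument as the paper, which gives the proof in the paragraph immediately preceding the corollary: $r$-convexity of $X$ yields $r'$-concavity of $X'$, hence $(X')^{1/p}$ is $pr'$-concave, so Conjecture~\ref{ncon} supplies the hypothesis of Theorem~\ref{char}, and Theorem~\ref{char} finishes the proof. The only cosmetic difference is that you fix a specific $p_0$ (e.g.\ $p_0=2$), whereas the paper simply notes that the hypothesis holds for all $p\ge p_0>1$.
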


It would be interesting to find a direct proof of this result without appealing to Conjecture \ref{ncon}.
Alternatively, Corollary \ref{assum} suggests a possible strategy for disproving Conjecture~\ref{ncon}.

\section{Preliminaries}
Let $L^0({\mathbb R}^n)$ denote the space of measurable functions on ${\mathbb R}^n$. A vector space $X\subseteq L^0({\mathbb R}^n)$ equipped with a norm $\|\cdot\|_X$
is called a Banach function space over ${\mathbb R}^n$ if it satisfies the following properties:
\begin{itemize}
\item {\it Ideal property}: If $f\in X$ and $g\in L^0({\mathbb R}^n)$ with $|g|\le |f|$, then $g\in X$ and $\|g\|_X\le \|f\|_{X}$.
\item {\it Fatou property}: If $0\le f_j\uparrow f$ for $\{f_j\}$ in $X$ and $\sup_j\|f_j\|_{X}<\infty$, then $f\in X$ and $\|f\|_{X}=\sup_j\|f_j\|_{X}$.
\item {\it Saturation property}: For every measurable set $E\subset {\mathbb R}^n$ of positive measure, there exists a measurable subset $F\subseteq E$ of positive measure
such that $\chi_F\in X$.
\end{itemize}

We refer to a recent survey by Lorist and Nieraeth \cite{LN24} about (quasi)-Banach function spaces, where, in particular, one can find a discussion about the above choice of axioms.

Given a Banach function space $X$, we define the associate space (also called the K\"othe dual) $X'$ as the space of all $f\in L^0({\mathbb R}^n)$ such that
$$\|f\|_{X'}:=\sup_{\|g\|_{X}\le 1}\int_{{\mathbb R}^n}|fg|<\infty.$$
By the Lorentz--Luxembourg theorem (see \cite[Th. 71.1]{Z67}), we have $X''=X$ with equal norms.

Let $X$ be a Banach function space, and let $1\le p, q\le \infty$. We say that $X$ is $p$-convex if
$$\|(|f|^p+|g|^p)^{1/p}\|_{X}\le (\|f\|_{X}^p+\|g\|_{X}^p)^{1/p},\quad f,g\in X,$$
and we say that $X$ is $q$-concave if
$$(\|f\|_{X}^q+\|g\|_{X}^q)^{1/q}\le \|(|f|^q+|g|^q)^{1/q}\|_{X} ,\quad f,g\in X.$$

By a weight we mean a non-negative locally integrable function on~${\mathbb R}^n$. Given a weight $w$ and a cube $Q\subset {\mathbb R}^n$,
denote $\langle w\rangle_Q:=\frac{1}{|Q|}\int_Qw$.

Recall that a weight $w$ satisfies the $A_p, 1<p<\infty,$ condition if
$$[w]_{A_p}:=\sup_{Q}\langle w\rangle_Q\langle w^{-p'/p}\rangle_Q^{p/p'}<\infty.$$

Let $X$ be a Banach function space, and let $1<p<\infty$. We say that $X$ is $A_{p}$-regular if there exist $C_1, C_2>0$ such that for every $f\in X$ there is an $A_{p}$ weight $w\ge |f|$ a.e. with $[w]_{A_{p}}\le C_1$
and $\|w\|_{X}\le C_2\|f\|_{X}$.

The following results are due to Rutsky \cite[Th. 2]{R15}, \cite[Prop. 7]{R14} (see also \cite{L25} for alternative proofs).

\begin{theorem}[\cite{R15}]\label{rch} Let $X$ be a Banach function space, and let $1<p<\infty$. The following statements are equivalent:
\begin{enumerate}[(i)]
\item The maximal operator $M$ is bounded on $X^{1/p}$ and on $(X^{1/p})'$;
\item $X'$ is $A_p$-regular.
\end{enumerate}
\end{theorem}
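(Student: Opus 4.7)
The plan is to prove both implications using the Rubio de Francia iteration algorithm together with the Jones factorization theorem, which expresses every $A_p$ weight as $w = w_1 w_2^{1-p}$ with $w_1, w_2 \in A_1$.

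For the direction $(ii)\Rightarrow(i)$, the first step is to show $M$ is bounded on $X^{1/p}$ by duality:
\[
\|Mf\|_{X^{1/p}}^p = \|(Mf)^p\|_X = \sup_{\|g\|_{X'}\le 1}\int (Mf)^p g.
\]
For each admissible $g$, the $A_p$-regularity of $X'$ produces $w \ge g$ with $w\in A_p$, $[w]_{A_p}\le C_1$, $\|w\|_{X'}\le C_2$. Muckenhoupt's theorem bounds $\int (Mf)^p w$ by $C[w]_{A_p}^{p'}\int |f|^p w$, and generalized Hölder in $X \times X'$ gives $\int |f|^p w \le \||f|^p\|_X \|w\|_{X'} \le C\|f\|_{X^{1/p}}^p$. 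The second step, boundedness on $(X^{1/p})'$, is dual: for $h\in (X^{1/p})'$ and nonnegative $f\in X^{1/p}$, split
\[
\int (Mh)\,f = \int (Mh\cdot u^{1/p'})(f\cdot u^{-1/p'})
\]
and apply Hölder's inequality with exponents $(p',p)$. Choosing $u = w^{1-p'}\in A_{p'}$, where $w\in A_p$ is an envelope selected using the $A_p$-regularity of $X'$ together with the Calderón–Lozanovskii identity $(X^{1/p})' = (X')^{1/p}(L^1)^{1/p'}$, the $L^{p'}(u)$-boundedness of $M$ closes the estimate.

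For the converse $(i)\Rightarrow(ii)$, set $Y := X^{1/p}$ and build Rubio de Francia operators $\mathcal{R}_1:Y\to Y$ and $\mathcal{R}_2:Y'\to Y'$ via the standard geometric series $\mathcal{R}_i\phi=\sum_{k\ge 0}M^k\phi/(2\|M\|)^k$, yielding $A_1$-majorants with controlled norms and $A_1$-constants in $Y$ and $Y'$ respectively. Given $f\in X'$, I would use the Lozanovskii factorization of $X'$ relative to $Y$ and $Y'$ to select $A$ and $B$ built from $|f|$ so that
\[
w := \mathcal{R}_1(A)\cdot \mathcal{R}_2(B)^{1-p}
\]
satisfies $w\ge |f|$ pointwise and lies in $A_p$ by Jones factorization, with $[w]_{A_p}$ controlled in terms of $[\mathcal{R}_1(A)]_{A_1}$ and $[\mathcal{R}_2(B)]_{A_1}$. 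Verifying $\|w\|_{X'}\lesssim \|f\|_{X'}$ again reduces to a multiplicative Hölder-type inequality supplied by the Calderón–Lozanovskii structure.

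The hard part is the norm compatibility between $X'$, $Y = X^{1/p}$, and $Y'$: both directions hinge on translating Calderón–Lozanovskii product relations into a multiplicative Hölder inequality for expressions of the form $\mathcal{R}_1(A)\cdot\mathcal{R}_2(B)^{1-p}$, the multiplicative counterpart of the additive Jones decomposition $A_p = A_1\cdot A_1^{1-p}$. Choosing the factorization of $f\in X'$ into a pair $(A,B)$ with norms in $Y$ and $Y'$ comparable (to the appropriate powers) to $\|f\|_{X'}$ is the delicate step, and is where the exponent $p$ enters decisively.
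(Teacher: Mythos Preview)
The paper does not prove this theorem. It is stated in the Preliminaries (Section~2) as a known result due to Rutsky \cite[Th.~2]{R15}, with a pointer to \cite{L25} for alternative proofs, and is then invoked as a black box in the proof of Theorem~\ref{char}. There is therefore no argument in the paper against which to compare your attempt.

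For what it is worth, your outline is the standard route taken in the cited references: Rubio de Francia iteration on $Y=X^{1/p}$ and on $Y'$ to manufacture $A_1$-majorants, combined with the Jones factorization $A_p=A_1\cdot A_1^{1-p}$ and the Calder\'on--Lozanovskii description of $(X^{1/p})'$. The step you single out as delicate---selecting the factorization of $f\in X'$ so that the resulting $w=\mathcal{R}_1(A)\,\mathcal{R}_2(B)^{1-p}$ simultaneously satisfies $w\ge|f|$ and $\|w\|_{X'}\lesssim\|f\|_{X'}$---is exactly where the work lies in Rutsky's and Lerner's proofs, and is handled there via the Lozanovskii factorization of $L^1$ through $Y$ and $Y'$ (equivalently, the identification of $X'$ with an appropriate Calder\'on product built from $Y$ and $Y'$). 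Your sketch of the direction $(ii)\Rightarrow(i)$ for boundedness on $(X^{1/p})'$ is the vaguest part; in the literature this is typically done by a symmetric duality argument once the Lozanovskii identity $(X^{1/p})'=(X')^{1/p}(L^1)^{1/p'}$ is in hand, rather than by the ad hoc splitting you indicate.
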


\begin{theorem}[\cite{R14}]\label{propr} Let $X$ be a Banach function space such that $X$ is $A_p$-regular for some $1<p<\infty$. Suppose that there exists $\d>0$ such
that $M$ is bounded on $X^{\d}$. Then $M$ is bounded on $X$.
\end{theorem}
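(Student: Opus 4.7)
The plan is to combine the $A_p$-regularity with a quantitative reverse-Jensen inequality for $A_\infty$ weights, so that the hypothesis ``$M$ bounded on $X^\delta$'' is upgraded to ``$M$ bounded on $X$'' once we test it on the $A_p$-dominating weights supplied by the regularity.

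First, I would dispose of the easy case $\delta \geq 1$. By Jensen's inequality, $Mg \leq M_\delta g := M(|g|^\delta)^{1/\delta}$ pointwise, and the definition $\|h\|_{X^\delta} = \||h|^{1/\delta}\|_X^\delta$ rewrites the hypothesis (by the substitution $g = |h|^\delta$) as $\|M_\delta h\|_X \leq K^{1/\delta}\|h\|_X$, where $K$ is the operator norm of $M$ on $X^\delta$; this already yields the conclusion. So from now on I assume $0 < \delta < 1$.

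Given $f \in X$, the $A_p$-regularity supplies $w \geq |f|$ a.e.\ with $[w]_{A_p} \leq C_1$ and $\|w\|_X \leq C_2\|f\|_X$. Since $Mf \leq Mw$ pointwise, it suffices to bound $\|Mw\|_X$ uniformly over such $w$. The key ingredient is a reverse-Jensen comparison for $A_\infty$ weights: the embedding $A_p \hookrightarrow A_\infty$ is quantitative, so $[w]_{A_\infty} \leq C'(n,p,C_1)$, and the standard characterization $\langle w\rangle_Q \leq C'\exp(\langle \log w\rangle_Q)$, combined with the elementary bound $\langle w^\sigma\rangle_Q^{1/\sigma} \geq \exp(\langle \log w\rangle_Q)$ (monotonicity of $L^\sigma$-means on the probability space $(Q, dx/|Q|)$), gives
\[
\left(\frac{1}{|Q|}\int_Q w^\sigma\right)^{1/\sigma} \geq c\langle w\rangle_Q, \qquad \sigma \in (0,1],
\]
with $c = c(n,p,C_1) > 0$. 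Taking $\sup_{Q\ni x}$ yields $M_\sigma w(x) \geq c Mw(x)$, while Jensen supplies the matching upper bound $M_\sigma w \leq Mw$; hence $M_\sigma w$ and $Mw$ are pointwise comparable on every $w$ with $[w]_{A_p} \leq C_1$, uniformly in $w$.

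Applying this comparison with $\sigma = \delta$ together with the equivalent form $\|M_\delta h\|_X \leq K^{1/\delta}\|h\|_X$ of the hypothesis closes the chain
\[
\|Mf\|_X \leq \|Mw\|_X \leq c^{-1}\|M_\delta w\|_X \leq c^{-1}K^{1/\delta}\|w\|_X \leq c^{-1}K^{1/\delta}C_2\|f\|_X.
\]
The main obstacle I anticipate is keeping the reverse-Jensen constant $c$ uniform across all weights with $[w]_{A_p} \leq C_1$; this uniformity, standard but requiring a careful quantitative pass through the $A_\infty$ theory, is exactly what lets the pointwise comparison be composed with the $A_p$-regular majorization to produce the desired bound for every $f \in X$.
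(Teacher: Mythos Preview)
The paper does not supply its own proof of this statement: Theorem~\ref{propr} is quoted from Rutsky~\cite{R14} (with \cite{L25} mentioned for alternative arguments), so there is nothing in the present paper to compare against.

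That said, your argument is correct and self-contained. The reduction of ``$M$ bounded on $X^\delta$'' to ``$M_\delta$ bounded on $X$'' is clean, and the pointwise comparison $Mw \le C' M_\delta w$ for $A_p$ weights with uniformly bounded characteristic is exactly the right bridge. Your uniformity concern is resolved by the observation you essentially made: the Hru\v{s}\v{c}ev constant $\sup_Q \langle w\rangle_Q \exp\bigl(-\langle \log w\rangle_Q\bigr)$ is bounded by $[w]_{A_p}$ (via Jensen applied to $w^{-1/(p-1)}$), so $C'$ depends only on $C_1$. One small technical point worth stating explicitly is that $\log w$ is locally integrable for $w\in A_p$, so the geometric mean is well defined; this is standard. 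With that, the chain
\[
\|Mf\|_X \le \|Mw\|_X \le C'\|M_\delta w\|_X \le C'K^{1/\delta}\|w\|_X \le C'K^{1/\delta}C_2\|f\|_X
\]
is fully justified.
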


\section{Proof of Theorem \ref{mr}}
An important ingredient of the proof is the following covering lemma established by Mateu--Matilla--Nicolau--Orobitg \cite{MMNO00}.

\begin{lemma}\label{cover}
Let $E$ be a measurable subset of $Q$ with $|E|\le \la|Q|$, where $\la\in (0,1)$. Then there exists a sequence $\{R_j\}$
of cubes contained in $Q$ such that
\begin{enumerate}
\renewcommand{\labelenumi}{(\roman{enumi})}
\item
$|R_j\cap E|=\la|R_j|$;
\item
the family $\{R_j\}$ is almost disjoint with constant $B_n$, that is, every point of $Q$
belongs to at most $B_n$ cubes $R_j$;
\item
$E'\subset\cup_jR_j$, where $E'$ is the set of the density
points of $E$.
\end{enumerate}
\end{lemma}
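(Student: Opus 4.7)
The plan is to produce, for each density point $x \in E'$ of $E$, a single cube $R_x$ with $x \in R_x \subset Q$ and $|R_x \cap E| = \la |R_x|$ via an intermediate-value argument, and then to extract from the resulting family $\{R_x\}_{x \in E'}$ an at most countable subfamily $\{R_j\}$ that still covers $E'$ and has bounded overlap, by a Besicovitch-type selection.

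For the first step, fix $x \in E'$. Since $|E| \le \la |Q|$ one has $|Q \cap E|/|Q| \le \la$, while at the opposite extreme, because $x$ is a Lebesgue density point of $E$, $|Q(x,r) \cap E|/|Q(x,r)| \to 1$ as $r \to 0^+$, where $Q(x,r)$ denotes the axis-aligned cube of side $r$ centered at $x$. Consequently, for all sufficiently small $r$ with $Q(x,r) \subset Q$, this ratio exceeds $\la$. I would then connect $Q(x,r)$ to $Q$ by a continuous one-parameter family of cubes $\{R_t\}_{t \in [0,1]}$ with $x \in R_t \subset Q$ for every $t$, $R_0 = Q(x,r)$, and $R_1 = Q$. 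A concrete choice: first enlarge $Q(x,r)$ concentrically until the growing cube just touches $\pa Q$, and then let the center drift continuously along the segment joining $x$ to the center of $Q$ while the side length increases up to the side of $Q$. The function $t \mapsto |R_t \cap E|/|R_t|$ is continuous, takes a value $>\la$ at $t=0$ and a value $\le \la$ at $t=1$, so the intermediate value theorem provides $t^* \in (0,1)$ with $|R_{t^*} \cap E| = \la |R_{t^*}|$; set $R_x := R_{t^*}$.

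For the second step, I would apply a Besicovitch-type covering theorem suited to families of cubes containing distinguished points (not necessarily centered at them) to extract a countable subfamily $\{R_{x_j}\}$ that still covers $E'$ and whose multiplicity is bounded by a constant $B_n$ depending only on $n$. If one prefers to work only with the classical centered Besicovitch theorem, a simple reduction is to replace each $R_x$ by the smallest cube centered at $x$ containing it (of side at most twice the side of $R_x$), apply the standard selection to this auxiliary family, and observe that the original cubes $R_{x_j}$ then retain a dimensional overlap bound.

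The main obstacle is Step 1: one has to set up a continuous interpolation $\{R_t\}$ that simultaneously keeps $x \in R_t$ and $R_t \subset Q$ throughout. Once such an interpolation is in place, continuity of $t \mapsto |R_t \cap E|$ follows from dominated convergence, and the selection step in Step 2 is a standard dimensional argument.
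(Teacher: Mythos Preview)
The paper does not prove this lemma; it is quoted from Mateu--Mattila--Nicolau--Orobitg \cite{MMNO00}, and the constant $B_n$ is simply the Besicovitch constant. Your outline --- for each $x\in E'$ produce a cube $R_x\subset Q$ with $x\in R_x$ and $|R_x\cap E|=\la|R_x|$ via an intermediate-value interpolation between a small cube centered at $x$ and $Q$, then extract a bounded-overlap subcover by a Besicovitch selection --- is precisely the argument in \cite{MMNO00}, so there is nothing to compare.

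One small gap worth flagging: your fallback reduction to the \emph{centered} Besicovitch theorem does not work as stated. After replacing each $R_x$ by the smallest cube $C_x$ centered at $x$ that contains it and selecting a subfamily $\{C_{x_j}\}$ with bounded overlap covering $E'$, the cubes $\{R_{x_j}\}$ do inherit the overlap bound (since $R_{x_j}\subset C_{x_j}$), but they need not cover $E'$: a point $y\in E'$ lying in some $C_{x_j}$ has no reason to lie in the smaller cube $R_{x_j}$. So you genuinely need a Besicovitch-type selection that tolerates off-center distinguished points; such versions exist, but this is an additional ingredient rather than a corollary of the centered theorem. Alternatively, one can refine Step~1 so that $x$ always sits in a fixed central portion of $R_x$, but this makes the interpolation near $\partial Q$ more delicate than you indicate.
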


Observe that the constant $B_n$ in Lemma \ref{cover} comes from the Besicovitch covering theorem.

\begin{proof}[Proof of Theorem \ref{mr}] We use the same method as in \cite{IJN17}.
Let us apply Lemma \ref{cover}, and let $\{R_{\la,j}\}$ be the corresponding cubes. Define
$$\psi(x,\la):=\sum_j\chi_{R_{\la,j}}(x).$$
Then summing up $|R_{\la,j}\cap E|=\la|R_{\la,j}|$ yields
$$
\int_E\psi(x,\la)dx=\la\int_Q\psi(x,\la)dx.
$$

If $M_Q\chi_E(x)<\la$, then no $R_{\la,j}$ contains $x$, and hence $\psi(x,\la)=0$. Therefore, the above equality implies
\begin{equation}\label{eq}
(1-\la)\int_E\psi(x,\la)dx=\la\int_{\{x\in Q:M_Q\chi_E\ge \la\}\setminus E}\psi(x,\la)dx.
\end{equation}

By property (ii) of Lemma \ref{cover}, $\psi(x,\la)\le B_n$. Next, by property (iii), $\psi(x,\la)\ge 1$ for all $x\in E'$. Since $|E'|=|E|$, by (\ref{eq}) we obtain
$$(1-\la)|E|\le B_n\la\big(|\{x\in Q:M_Q\chi_E(x)\ge \la\}|-|E|\big).$$
Hence,
$$|\{x\in Q:M_Q\chi_E(x)\ge \la\}|\ge \Big(1+\frac{1-\la}{B_n\la}\Big)|E|,$$
which implies the same estimate for $|\{x\in Q:M_Q\chi_E(x)>\la\}|$, and the proof is complete.
\end{proof}

\section{Proof of Theorems \ref{itr} and \ref{char}}
Recall that, by the definition of the non-increasing rearrangement,
$$(f\chi_Q)^*(\la|Q|)=\inf\{\a>0:|\{x\in Q:|f(x)|>\a\}|\le \la|Q|\},$$
from which, for every $\a>0$,
$$(f\chi_Q)^*(\la|Q|)>\a\Leftrightarrow |\{x\in Q:|f(x)|>\a\}|>\la|Q|.$$
Hence,
\begin{equation}\label{ident}
\{x:m_{\la}f(x)>\a\}=\{x:M\chi_{\{|f|>\a\}}(x)>\la\}.
\end{equation}

\begin{lemma}\label{comp} Let $0<\la,\eta<1$. There exists a constant $\ga=\ga(n,\la)<1$ such that for all $x\in {\mathbb R}^n$,
\begin{equation}\label{adv}
m_{\ga\eta}f(x)\le m_{\eta}(m_{\la}f)(x).
\end{equation}
\end{lemma}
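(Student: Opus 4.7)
The plan is to convert the pointwise inequality $m_{\ga\eta}f(x) \le m_{\eta}(m_{\la}f)(x)$ into a statement about level sets of $|f|$ via the identity~(\ref{ident}), and then absorb the loss $\ga$ using Theorem~\ref{mr}.

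First, I would rewrite the desired inequality as the equivalent implication
$$ m_{\ga\eta}f(x) > \a \ \Longrightarrow\  m_{\eta}(m_{\la}f)(x) > \a \qquad \text{for every } \a > 0. $$
Applying~(\ref{ident}) twice, with $E_{\a} := \{|f| > \a\}$, the hypothesis becomes $M\chi_{E_{\a}}(x) > \ga\eta$, while the conclusion becomes $M\chi_{\{M\chi_{E_{\a}} > \la\}}(x) > \eta$ (using that $\{m_{\la}f>\a\}=\{M\chi_{E_{\a}}>\la\}$). Hence it suffices to prove the self-contained statement: for every measurable $E \subset \R^n$ and every $x$, if $M\chi_E(x) > \ga\eta$, then $M\chi_{\{M\chi_E > \la\}}(x) > \eta$.

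To establish this, I would fix a cube $Q \ni x$ with $|E\cap Q|/|Q| > \ga\eta$ and split into two cases. If $|E\cap Q| > \la|Q|$, then $M\chi_E(y) \ge |E\cap Q|/|Q| > \la$ for every $y \in Q$, so $Q \subset \{M\chi_E > \la\}$, and the cube $Q$ alone already gives $M\chi_{\{M\chi_E > \la\}}(x) \ge 1 > \eta$. Otherwise $0 < |E\cap Q| \le \la|Q|$, and Theorem~\ref{mr} applied to the set $E\cap Q$ inside $Q$ (using that $M_Q\chi_E = M_Q\chi_{E\cap Q}$ on $Q$) yields
$$|\{y\in Q: M\chi_E(y)>\la\}| \ge |\{y\in Q: M_Q\chi_E(y)>\la\}| \ge \Big(1+\frac{1-\la}{B_n\la}\Big)|E\cap Q| > \Big(1+\frac{1-\la}{B_n\la}\Big)\ga\eta\,|Q|.$$
Choosing $\ga = \ga(n,\la) := \frac{B_n\la}{B_n\la + 1 - \la}$, which is strictly less than $1$ whenever $\la < 1$, the right-hand side equals $\eta|Q|$, so the cube $Q$ witnesses $M\chi_{\{M\chi_E > \la\}}(x) > \eta$.

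The proof has no real obstacle beyond invoking Theorem~\ref{mr} at exactly the right moment: it is the strict improvement $C_n(\la) > 1$ that produces a $\ga$ strictly less than $1$. Everything else is a mechanical unwinding of identity~(\ref{ident}) together with the natural dichotomy $|E\cap Q|$ vs.\ $\la|Q|$.
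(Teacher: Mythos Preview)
Your proof is correct and follows essentially the same route as the paper: both invoke Theorem~\ref{mr} to gain the factor $C(n,\la)=1+\frac{1-\la}{B_n\la}$ and then set $\ga=1/C(n,\la)$, which is exactly your $\frac{B_n\la}{B_n\la+1-\la}$. The paper phrases the key step as the rearrangement inequality $(f\chi_Q)^*(\ga\eta|Q|)\le\big((m_{\la}f)\chi_Q\big)^*(\eta|Q|)$, while you unwind everything through $M\chi_E$ via~(\ref{ident}); your explicit dichotomy $|E\cap Q|>\la|Q|$ versus $|E\cap Q|\le\la|Q|$ makes transparent a step the paper leaves implicit, but the arguments are otherwise identical.
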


\begin{proof} By Theorem \ref{mr} along with (\ref{ident}),
$$|\{x\in Q:m_{\la}f(x)>\a\}|\ge C(n,\la)|\{x\in Q:|f(x)|>\a\}|,$$
where $C(n,\la):=1+\frac{1-\la}{B_n\la}$. Hence, for $\ga:=\frac{1}{C(n,\la)}$,
$$(f\chi_Q)^*(\ga\eta|Q|)\le \big((m_{\la}f)\chi_Q\big)^*\big(\eta|Q|\big),$$
from which the desired estimate follows.
\end{proof}

\begin{remark}\label{rem} A similar estimate was obtained in \cite{LP07} but using that $C_n(\la)\ge \frac{1}{2^n\la}$ instead of the bound from Theorem \ref{mr}. Namely, it was shown in \cite{LP07} that
$$m_{2^n\la\eta}f(x)\le m_{\eta}(m_{\la}f)(x).$$
This estimate holds under the restriction $\la\eta<1/2^n$. The advantage of~(\ref{adv}) is that it holds for all $\la,\eta\in (0,1)$. This will be used in the proof of Theorem \ref{itr}.
\end{remark}

\begin{prop}\label{maxf}
Let $X$ be a Banach function space. Then
$$\|Mf\|_{X}\le \int_0^1\|m_{\la}f\|_{X}d\la.$$
\end{prop}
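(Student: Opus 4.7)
The plan is to prove the proposition by first establishing the pointwise bound
$$Mf(x)\le \int_0^1 m_{\la}f(x)\, d\la$$
and then integrating in $X$ via Minkowski's integral inequality.

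The key step is to compute $\int_0^1 m_{\la}f(x)\, d\la$ using the layer-cake formula together with the identity (\ref{ident}). Since $m_{\la}f(x)>\a$ if and only if $M\chi_{\{|f|>\a\}}(x)>\la$, and since $M\chi_E(x)\le 1$ for any set $E$, the one-dimensional slice satisfies
$$\bigl|\{\la\in(0,1):m_{\la}f(x)>\a\}\bigr|=M\chi_{\{|f|>\a\}}(x).$$
Fubini then gives
$$\int_0^1 m_{\la}f(x)\, d\la=\int_0^\infty M\chi_{\{|f|>\a\}}(x)\, d\a.$$

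Next I would compare the right-hand side with $Mf(x)$. For every cube $Q\ni x$, the usual layer-cake decomposition of $|f|$ yields
$$\frac{1}{|Q|}\int_Q|f|=\int_0^\infty \frac{|Q\cap\{|f|>\a\}|}{|Q|}\, d\a.$$
Taking the supremum over cubes $Q\ni x$ and pulling the supremum inside the integral (which costs only an inequality) produces
$$Mf(x)\le \int_0^\infty M\chi_{\{|f|>\a\}}(x)\, d\a=\int_0^1 m_{\la}f(x)\, d\la,$$
which is the desired pointwise bound.

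Finally, I would invoke Minkowski's integral inequality for the Banach function space $X$: for any nonnegative measurable $F(x,\la)$,
$$\nrm*{\int_0^1 F(\cdot,\la)\, d\la}_X\le \int_0^1 \|F(\cdot,\la)\|_X\, d\la,$$
which follows by the ideal property and the Lorentz--Luxembourg duality $X''=X$ by testing against $g\in X'$ with $\|g\|_{X'}\le 1$ and applying Fubini. Applying this to $F(x,\la)=m_{\la}f(x)$ concludes the proof. There is no real obstacle here; the only point to be careful about is justifying Minkowski's integral inequality in a general Banach function space, which is standard once the duality $X''=X$ is available.
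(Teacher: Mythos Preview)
Your proof is correct and follows essentially the same strategy as the paper: establish the pointwise inequality $Mf(x)\le \int_0^1 m_{\la}f(x)\,d\la$ and then pass to $X$ by duality (which is exactly Minkowski's integral inequality). The only difference is cosmetic: the paper obtains the pointwise bound in one line from the rearrangement identity $\frac{1}{|Q|}\int_Q|f|=\int_0^1 (f\chi_Q)^*(\la|Q|)\,d\la$, whereas you reach the same conclusion via the layer-cake formula and the identity~(\ref{ident}); these two routes are equivalent reformulations of each other.
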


\begin{proof} Using that
$$\frac{1}{|Q|}\int_Q|f|=\int_0^1(f\chi_Q)^*(\la|Q|)d\la,$$
we obtain
$$Mf(x)\le \int_0^1m_{\la}f(x)d\la.$$
Therefore, for every $g$ with $\|g\|_{X'}=1$,
$$\int_{{\mathbb R}^n}(Mf)g\,dx\le \int_0^1\int_{{\mathbb R}^n}(m_{\la}f)g\,dx\,d\la\le \int_0^1\|m_{\la}f\|_{X}d\la,$$
which, by duality (along with the Lorentz--Luxembourg theorem), completes the proof.
\end{proof}

\begin{proof}[Proof of Theorem \ref{itr}] By the assumption, there exist $\la_0\in (0,1)$ and $C>0$ such that for every $f\in X$,
$$\|m_{\la_0}f\|_{X}\le C\|f\|_{X}.$$
From this, by Lemma \ref{comp}, there exists $\ga=\ga(n,\la_0)<1$ such that
$$\|m_{\ga\la_0}f\|_{X}\le \|m_{\la_0}(m_{\la_0}f)\|_{X}\le C^2\|f\|_{X}.$$
By the same argument,
$$\|m_{\ga^2\la_0}f\|_{X}\le \|m_{\ga\la_0}(m_{\la_0})\|_{X}\le C^3\|f\|_{X}.$$
By induction we obtain that for all $k=0,1,\dots,$
$$\|m_{\ga^k\la_0}f\|_{X}\le  C^{k+1}\|f\|_{X}.$$

Assume now that $\la\in (0,\la_0]$. Take $k$ such that $\la\in [\ga^{k+1}\la_0,\ga^k\la_0]$. Setting $r:=\frac{\log C}{\log(1/\ga)}$ and using that $m_{\la}$ is non-increasing in $\la$, we obtain
$$\|m_{\la}f\|_{X}\le \|m_{\ga^{k+1}\la_0}f\|_{X}\le C^{k+2}\|f\|_{X}\le C^2\la_0^r\frac{1}{\la^r}\|f\|_{X}.$$
If $\la\in (\la_0,1)$, then trivially, by the previous estimate,
$$\|m_{\la}f\|_{X}\le \|m_{\la_0}f\|_{X}\le C^2\|f\|_{X}.$$
Therefore, for all $\la\in (0,1)$,
$$\|m_{\la}f\|_{X}\le C^2\frac{1}{\la^r}\|f\|_{X}.$$

From this, for $p>0$, using that $(m_{\la}f)^s=m_{\la}(|f|^s)$ for every $s>0$, we obtain
$$\|m_{\la}f\|_{X^{1/p}}=\|m_{\la}(|f|^{p})\|_{X}^{1/p}\le \Big(C^2\frac{1}{\la^r}\Big)^{1/p}\|f\|_{X^{1/p}}.$$
Taking now any $p>0$ such that $p>max(1,r)$ completes the proof by Proposition \ref{maxf}.
\end{proof}

\begin{proof}[Proof of Theorem \ref{char}]
If $m_{\la}$ is bounded on $X$ and on $X'$ for some $\la\in (0,1)$, then, by Theorem \ref{itr}, there exist $r,q>1$ such that $M$ is bounded on $X^{1/r}$ and on $(X')^{1/q}$.

By H\"older's inequality, $M$ is bounded on $(X')^{1/p}$ for all $p\ge q$. Therefore, by the hypothesis, $M$ is bounded on $\big[(X')^{1/p}\big]'$ for $p\ge max(q,p_0)$.
From this, by Theorem \ref{rch} applied to $X'$ instead of $X$, we obtain that $X$ is $A_p$-regular for $p=\max(q,p_0)$. Using that $M$ is bounded on $X^{1/r}$, we obtain, by Theorem \ref{propr},
that $M$ is bounded on $X$.
\end{proof}

In conclusion we observe that the following version of Theorem \ref{char} holds with essentially the same proof.

\begin{theorem}\label{char1}
Let $X$ be a Banach function space with the following property: for every $p\ge 1$, the Hardy--Littlewood maximal operator $M$ is bounded on $X^{1/p}$ if and only if $M$ is bounded on $(X^{1/p})'$. Then the following are equivalent:
\begin{enumerate}[(i)]
\item $M$ is bounded on $X$ and on $X'$;
\item $m_{\la}$ is bounded on $X$ and on $X'$ for some $\la\in (0,1)$.
\end{enumerate}
\end{theorem}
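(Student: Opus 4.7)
The direction (i) $\Rightarrow$ (ii) is essentially free: as recalled in the introduction, Chebyshev's inequality at $\d=1$ gives the pointwise bound $m_\la f\le \la^{-1}Mf$, so boundedness of $M$ on any Banach function space transfers directly to $m_\la$ for any $\la\in(0,1)$. This handles both $X$ and $X'$ simultaneously.

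For the nontrivial direction (ii) $\Rightarrow$ (i), the plan is to run the chain of implications from the proof of Theorem \ref{char}, but to use the biconditional in the hypothesis in two different places so that no convexity or conjectural input is required. First, I would apply Theorem \ref{itr} to both $X$ and $X'$ to produce exponents $r,q>1$ for which $M$ is bounded on $X^{1/r}$ and on $(X')^{1/q}$. Next, I would invoke the hypothesis at $p=r$: this upgrades the bound on $X^{1/r}$ to a simultaneous bound on $X^{1/r}$ and $(X^{1/r})'$, which by Theorem \ref{rch} is equivalent to $X'$ being $A_r$-regular.

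With $X'$ now known to be $A_r$-regular and $M$ known to be bounded on $(X')^{1/q}$, Theorem \ref{propr} applied to $X'$ (with exponent $\d=1/q$) yields that $M$ is bounded on $X'$. The final move is to invoke the hypothesis at $p=1$, which reads precisely ``$M$ is bounded on $X$ if and only if $M$ is bounded on $X'$,'' giving boundedness on $X$ and closing the equivalence.

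I do not anticipate any serious obstacle: the argument is a symmetric repackaging of the implication chain used for Theorem \ref{char}, with the one-sided assumption there replaced by the biconditional here so that the transfer of boundedness from $X'$ back to $X$ comes for free from the $p=1$ case. The only point that deserves a sanity check is that the $p=1$ instance of the hypothesis is really what closes the loop; without it one would still need $r$-convexity of $X$ plus Conjecture \ref{ncon}, as in Corollary \ref{assum}.
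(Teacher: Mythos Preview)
Your proposal is correct and follows essentially the same route as the paper: Chebyshev for (i)$\Rightarrow$(ii); for (ii)$\Rightarrow$(i), apply Theorem~\ref{itr} to $X$ and $X'$ to get exponents $r,q>1$, use the hypothesis at $p=r$ together with Theorem~\ref{rch} to make $X'$ $A_r$-regular, apply Theorem~\ref{propr} to $X'$ with $\delta=1/q$, and close with the hypothesis at $p=1$. No deviation from the paper's argument.
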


Indeed, the implication $(i)\Rightarrow (ii)$ is trivial because, by Chebyshev's inequality, $m_{\la}f\le \frac{1}{\la}Mf$. To show that $(ii)\Rightarrow (i)$, we use the same arguments as in the proof of Theorem \ref{char}.
Namely, we obtain that $M$ is bounded on $X^{1/r}$ and on $(X')^{1/q}$. Hence, $M$ is bounded on $(X^{1/r})'$, and so $X'$ is $A_r$-regular. This, along with boundedness on $(X')^{1/q}$, implies that $M$ is bounded on $X'$.
From this, by the hypothesis with $p=1$, we have that $M$ is bounded on $X$.

\section{Appendix}
Denote by $S_0({\mathbb R}^n)$ the set of all measurable functions $f$ on ${\mathbb R}^n$ such that
$$|\{x\in {\mathbb R}^n:|f(x)|>\a\}|<\infty$$
for all $\a>0$.

We will prove the following statement.

\begin{prop}\label{prml} For all $f\in S_0({\mathbb R}^n)$,
$$\lim_{\la\to 1^-}m_{\la}f(x)=|f(x)|$$
almost everywhere.
\end{prop}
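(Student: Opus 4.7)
My plan is to show that for each fixed $\alpha > 0$ the equality
$$\{x : M\chi_{E_\alpha}(x) \geq 1\} = E_\alpha \quad \text{up to a null set},$$
where $E_\alpha := \{|f| > \alpha\}$, suffices to establish the proposition. Indeed, from identity (\ref{ident}) and the monotone decrease of $m_\la f(x)$ in $\la$, one has
$$\bigcap_{\la \in (0,1)} \{m_\la f > \alpha\} = \bigcap_{\la \in (0,1)} \{M\chi_{E_\alpha} > \la\} = \{M\chi_{E_\alpha} \geq 1\}.$$
So, modulo null sets, $\{\lim_{\la \to 1^-} m_\la f > \alpha\} \subseteq E_\alpha \subseteq \{\lim_{\la \to 1^-} m_\la f \geq \alpha\}$, and taking $\alpha$ through a countable dense subset of $(0,\infty)$ upgrades this to the pointwise identity $\lim_{\la \to 1^-} m_\la f(x) = |f(x)|$ a.e.

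Now, to prove the set identity for fixed $\alpha > 0$: since $f \in S_0({\mathbb R}^n)$, the set $E_\alpha$ has finite measure, so $\chi_{E_\alpha} \in L^1$ and Lebesgue's differentiation theorem applies. The inclusion $E_\alpha \subseteq \{M\chi_{E_\alpha} \geq 1\}$ is immediate: at each Lebesgue point of $\chi_{E_\alpha}$ that lies in $E_\alpha$, averages over small cubes $Q \ni x$ tend to $1$, so $M\chi_{E_\alpha}(x) \geq 1$. The reverse inclusion is the substantive content. I would prove that at every density point $x$ of $E_\alpha^c$ one has $M\chi_{E_\alpha}(x) < 1$. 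Suppose, toward a contradiction, that there exist cubes $Q_k \ni x$ with $|E_\alpha \cap Q_k|/|Q_k| \to 1$, and consider their sidelengths $\ell_k$. If $\ell_k \to 0$, then $Q_k \subseteq B(x, \sqrt{n}\,\ell_k)$ and $|Q_k| \asymp |B(x, \sqrt{n}\,\ell_k)|$, which together with density $0$ of $E_\alpha$ at $x$ forces the ratio to $0$. If $\ell_k \to \infty$, then $|E_\alpha \cap Q_k|/|Q_k| \leq |E_\alpha|/|Q_k| \to 0$. In the remaining case $\ell_k \in [a,b]$ with $0 < a \leq b < \infty$, Bolzano--Weierstrass extracts a subsequence whose closures converge in Hausdorff distance to a cube $Q_\infty$ of positive measure, with $x \in \overline{Q_\infty}$ and $|E_\alpha \cap Q_\infty| = |Q_\infty|$. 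Then $Q_\infty \subseteq E_\alpha$ up to a null set, and $Q_\infty$ occupies a positive fraction of every sufficiently small ball around $x$, again contradicting density $0$.

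The main obstacle in this plan is the intermediate-sidelength case just described: neither the small-cube estimate (Lebesgue density) nor the large-cube estimate (finiteness of $|E_\alpha|$) applies directly, and one must invoke a compactness argument to extract a limiting cube and derive a contradiction from the assumed density $0$ of $E_\alpha$ at $x$. The remaining ingredients—the reduction via identity (\ref{ident}), the easy inclusion, and the combination over rational thresholds $\alpha$—are routine.
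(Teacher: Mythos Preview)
Your argument is correct and takes a genuinely different route from the paper. The paper's proof of the hard inclusion $\bar f\le |f|$ a.e.\ invokes Solyanik's theorem that the halo function $\varphi(\la)=\sup_E |\{M\chi_E>\la\}|/|E|$ satisfies $\varphi(\la)\to 1$ as $\la\to 1^-$: from this one gets $|\{\bar f>r\}\setminus\{|f|>r\}|\le (\varphi(\la)-1)\,|\{|f|>r\}|\to 0$ for each level $r$, and the same countable-threshold trick finishes. You instead prove directly the qualitative set identity $\{M\chi_{E_\alpha}\ge 1\}=E_\alpha$ modulo a null set, using only Lebesgue density and a compactness argument on the sidelengths of near-extremal cubes. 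Your approach is more self-contained---it avoids citing Solyanik and uses nothing beyond the differentiation theorem and elementary geometry of cubes---at the price of the somewhat fiddly trichotomy on $\ell_k$ (you should perhaps say explicitly that you first pass to a subsequence along which $\ell_k$ converges in $[0,\infty]$, and that $|E_\alpha\cap Q_k|\to |E_\alpha\cap Q_\infty|$ because $|Q_k\triangle Q_\infty|\to 0$). The paper's route is shorter once Solyanik is granted and is in principle quantitative, whereas yours is purely qualitative but entirely elementary.
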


This proposition is not used in the proofs of the main results of the paper. However, it answers a question implicit in those results: what happens to $m_{\la}$ as $\la$ approaches 1?

\begin{proof}[Proof of Proposition \ref{prml}] Denote $\bar f(x):=\lim_{\la\to 1^-}m_{\la}f(x)$. It is well known (see, e.g., \cite[Lemma 6]{L04}) that
$$|f(x)|\le m_{\la}f(x)$$
for every point of approximate continuity of $f$, for all $\la\in (0,1)$. Therefore, $|f(x)|\le \bar f(x)$ almost everywhere.

To show the reverse estimate, define the halo function of the maximal operator by
$$\f(\la):=\sup_{E}\frac{|\{x\in {\mathbb R}^n:M\chi_{E}(x)>\la\}|}{|E|},\quad\la\in (0,1),$$
where the supremum is taken over all measurable sets of positive finite measure.
It was shown by Solyanik \cite{S93} that $\lim_{\la\to 1^-}\f(\la)=1$.

Denote $E:=\{x\in {\mathbb R}^n: \bar f(x)>|f(x)|\}$. Then, for non-negative rational numbers $r_k$, setting
$$E_k:=\{x: \bar f(x)>r_k\}\setminus \{x:|f(x)|>r_k\},$$
we have $E=\cup_kE_k$. Next, for all $\la\in (0,1)$,
$$E_k\subset \{x: m_{\la}f(x)>r_k\}\setminus \{x:|f(x)|>r_k\}.$$
Therefore, by (\ref{ident}),
$$|E_k|\le (\f(\la)-1)|\{x:|f(x)|>r_k\}|.$$
Hence, letting $\la\to 1^-$ yields $|E_k|=0$, and thus $|E|=0$, which completes the proof.
\end{proof}

\end{document}